\documentclass{amsart}
\pagestyle{empty}
\newtheorem{teo} {Theorem}
\newtheorem{cor} {Corollary}

\begin{document}

\title{A note on the Li\'{e}nard-Chipart criterion and roots of 
some families of polynomials}

\author{Renato B. Bortolatto}
\thanks{Universidade Tecnol\'{o}gica Federal do Paran\'{a} (UTFPR). 
Campus Londrina - PR - Brazil.}

\begin{abstract}
We present some inequalities that provide different sufficient conditions for 
an univariate monic polynomial to be Hurwitz unstable.
These are motivated by difficult control problems where direct 
application of the Li\'{e}nard-Chipart criterion is not feasible. 
Hurwitz stability of some polynomials of degree five is also discussed. 
These results may be interpreted as stability results for some interval 
polynomials.
\end{abstract}

\subjclass[2010]{Primary: 93D09, 65H04}

\keywords{Robust stability, Roots of polynomial equations}

\maketitle

\section{Introduction}

In this note we'll say that a polynomial 
$$p(s) = s^n + \alpha_1 s^{n -1} + \ldots + \alpha_n$$
is Hurwitz stable 
if all roots have negative real part. Otherwise, the 
polynomial will be said Hurwitz unstable.

When $\alpha_i$ is positive for $i = 1, \ldots, n$, that is, 
all coefficients of $p(s)$ are positive, it is 
straight forward to prove that no real root can be strictly positive: 
Actually, it suffices to see that if $s > 0$ then $ p(s) > 0$, 
but this observation also follows from Descartes' rule of signs. 
Furthermore if $p(s)$ has a root in $s = 0$ then $\alpha_n = 0$. 

Hence, if we're trying to prove stability of a polynomial and if all 
coefficients 
$\alpha_i$ of $p(s)$ are strictly positive, we are left only to worry about the 
possibility of one of the complex roots to have positive real part. This 
still is a very complicated problem. For instance, bounds for roots like 
the ones derived from well-known ideas in complex analysis are not 
immediately useful since, a priori, all restrictions are given in absolute 
value (a non-trivial result that can be obtained using complex 
analysis is given by Routh's algorithm, see \cite{G}).

The Li\'{e}nard-Chipart stability criterion is a standard tool to understand   
the Hurwitz stability problem, which in turn has important consequences on 
the dynamics of some systems of differential equations. 
The theorem can be enunciated in the following way:

\begin{teo}[see \cite{G}, \cite{LC}] \label{lien}
Let $p(s) = s^n + \alpha_1 s^{n -1} + \ldots + \alpha_n$. 
A necessary and sufficient condition for all 
roots of $p(z)$ to have negative real parts 
is that 
 
$$\alpha_{i} > 0, \forall i =1, 2, \ldots, n \; \text{ and } \; 
\Delta_2 > 0, \Delta_4 > 0, \ldots $$
\end{teo}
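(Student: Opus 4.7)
The plan is to reduce the Li\'enard-Chipart statement to the classical Routh-Hurwitz criterion, which asserts that $p(s)$ is Hurwitz stable if and only if \emph{every} leading principal minor $\Delta_1, \Delta_2, \ldots, \Delta_n$ of the Hurwitz matrix is strictly positive. Since Routh-Hurwitz already settles the necessity direction (Hurwitz stability forces positivity of every $\alpha_i$ and of every $\Delta_k$, in particular of the even-indexed ones), the real content lies in the sufficiency direction: assuming $\alpha_i > 0$ for all $i$ and $\Delta_{2k} > 0$ for all admissible $k$, one must recover positivity of the odd-indexed minors $\Delta_1, \Delta_3, \ldots$ and then invoke Routh-Hurwitz.

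The main tool will be a set of algebraic identities linking consecutive Hurwitz determinants. The structure of the Hurwitz matrix, whose rows alternate between odd-indexed and even-indexed coefficients, yields recursive relations expressing $\Delta_k$ in terms of $\Delta_{k-1}$, $\Delta_{k-2}$, and products of the coefficients $\alpha_i$. I would first write out these relations by hand for small cases ($n=2,3,4$) to spot the pattern, and then establish the general identity by induction, using row and column operations on the Hurwitz matrix followed by Laplace expansion along the first column (where the $1$'s are concentrated). A parallel, and often cleaner, route is the Hermite-Biehler decomposition $p(s)=h(s^{2})+s\,k(s^{2})$: Hurwitz stability of $p$ is equivalent to negative interlacing of the roots of $h$ and $k$ together with the appropriate sign of the leading coefficients, and interlacing can be encoded as positivity of a Sturm-type sequence. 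One then checks that this Sturm-type sequence coincides, up to positive factors built from the $\alpha_i$, with the subsequence $\{\Delta_{2k}\}$, so that the odd-indexed minors become redundant as soon as every $\alpha_i$ is positive.

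The main obstacle will be the bookkeeping: the sign of each $\Delta_k$ must be tracked carefully through the recursion, and it is easy to make off-by-one errors between the cases $n$ even and $n$ odd, which demand slightly different choices of which subsequence of minors to check. A secondary difficulty is verifying that the identities remain valid down to the base case and that no hidden division by zero occurs when expressing $\Delta_k$ in terms of earlier minors; this usually forces a separate treatment of degenerate configurations before the induction can be run. Once these technical points are dealt with, combining the positivity of the coefficients with the positivity of $\Delta_2,\Delta_4,\ldots$ through the identities yields positivity of every $\Delta_k$, and a direct appeal to Routh-Hurwitz completes the argument.
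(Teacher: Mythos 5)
The paper does not prove this theorem at all: it is quoted as a known classical result with references to Gantmacher and to the original Li\'enard--Chipart paper, so there is no in-paper argument to compare yours against. Judged on its own terms, your text is a roadmap rather than a proof, and the roadmap stops exactly where the theorem's content begins. The necessity direction you dispose of correctly (stability gives $\alpha_i>0$ and, by Routh--Hurwitz, all $\Delta_k>0$). But the entire substance of Li\'enard--Chipart is the sufficiency claim that $\alpha_i>0$ for all $i$ together with $\Delta_2>0,\Delta_4>0,\ldots$ forces $\Delta_1>0,\Delta_3>0,\ldots$, and for that step you only announce two possible strategies without carrying either one out: the ``recursive relations expressing $\Delta_k$ in terms of $\Delta_{k-1}$, $\Delta_{k-2}$ and the coefficients'' are never written down, and it is not at all clear that such a clean two-term recursion exists (the known determinant identities relating Hurwitz minors, e.g.\ those coming from Jacobi's identity or Orlando's formula, are more delicate than this). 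Asserting that one will ``spot the pattern for $n=2,3,4$ and establish the general identity by induction'' is a promise, not an argument.

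Your second route --- the Hermite--Biehler decomposition $p(s)=h(s^2)+s\,k(s^2)$ and the identification of the interlacing condition with positivity of a Sturm-type (Hankel/Markov-parameter) sequence that reproduces the even-indexed minors up to positive factors --- is indeed essentially how Gantmacher proves the theorem in \cite{G}, so you have correctly located the standard proof. But the decisive lemma there, namely the precise determinantal identities tying $\Delta_{2\nu}$ and $\Delta_{2\nu+1}$ to the two families of Hankel determinants of the Markov parameters (which is what makes the odd minors redundant once the coefficients are positive), is exactly the part you leave unverified. Until one of the two routes is actually executed --- the identities stated and proved, the degenerate cases you yourself flag handled --- the proof has a genuine gap at its central step.
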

where 
\begin{equation*} 
\Delta_i = \begin{vmatrix} 
\alpha_1 & \alpha_3 & \alpha_5 & \ldots \\
1        & \alpha_2 & \alpha_4 & \ldots \\
0        & \alpha_1 & \alpha_3 & \ldots \\
0        &    1     & \alpha_2 & \ldots \\ 
0        &    0     & \alpha_1 & \ldots \\
\vdots   &    0   &  1  &       \\
\vdots   & \vdots   &    & & \\
0        &    0     &  \ldots  & \ldots & \alpha_i 
\end{vmatrix}
\end{equation*} 
with $\alpha_k = 0$ if $k > n$. 

This result can be thought as a simplification of the Routh-Hurwitz theorem 
in which, if all coefficients of $p(s)$ are positive, stability can be 
guaranteed by checking that $\Delta_i$ is positive for $i = 1, \ldots, n$. 
The Li\'{e}nard-Chipart criterion can also be stated using $\Delta_i$ for 
$i$ odd instead of even.

Although linearization together with the Li\'{e}nard-Chipart criterion may 
reduce the dynamic stability problem to a calculation (see \cite{K}), 
control problems in 
engineering can generate, in this manner, very 
complicated algebraic expressions 
that resist to simplifications, even by means of computational 
algebra (see for instance \cite{J}). 
Numerical simulations are often the only solution available to study 
the dynamics, as we resort to sampling values for the coefficients. On the 
other hand generic conditions on the coefficients, like the 
aforementioned $\alpha_i > 0$, may be physically natural, easier to evaluate 
or even be chosen by design. 

What we aim to do in this note is simplify a condition in the
Li\'{e}nard-Chipart criterion as much as possible, to provide more 
approachable and non-trivial necessary or sufficient conditions for the Hurwitz 
(in)stability of some polynomials. 
With this in mind, and assuming that all $\alpha_i$ are positive, 
it is worthwhile to study the sign of 
$\Delta_2$ instead of 
$\Delta_3$, so the next term to study is $\Delta_4$, with 
the sign of $\Delta_6$ being apparently much harder to understand. 

To study the sign of $\Delta_4$ we use a,
to the best of our knowledge, new formula for $\Delta_4$
which is presented in Theorem \ref{main} in the next section. 


\section{The main result}

\begin{teo} \label{main} We have that 
$$ \Delta_ 4 = -\alpha_2 (\alpha_5 - \alpha_1\alpha_4) 
\Delta_2 - \alpha_4 \Delta_2^2 
- (\alpha_5 - \alpha_1 \alpha_4)^2 - (\alpha_7 - \alpha_1\alpha_6)\Delta_2 $$ 
where $\Delta_2 = \alpha_1\alpha_2 - \alpha_3$.
\end{teo}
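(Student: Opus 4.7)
The plan is to derive the identity by a direct determinantal computation, choosing row operations that naturally produce the combination $\Delta_2 = \alpha_1 \alpha_2 - \alpha_3$ and the ``defect'' terms $\alpha_5 - \alpha_1\alpha_4$ and $\alpha_7 - \alpha_1\alpha_6$ that appear on the right-hand side.

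First, I would write out $\Delta_4$ explicitly as the $4\times 4$ determinant
\[
\Delta_4 = \begin{vmatrix}
\alpha_1 & \alpha_3 & \alpha_5 & \alpha_7 \\
1        & \alpha_2 & \alpha_4 & \alpha_6 \\
0        & \alpha_1 & \alpha_3 & \alpha_5 \\
0        & 1        & \alpha_2 & \alpha_4
\end{vmatrix}.
\]
The key observation is that the first and third rows begin with $\alpha_1$ and $0$, while the second and fourth rows begin with $1$ and $0$; so the replacements $R_1 \mapsto R_1 - \alpha_1 R_2$ and $R_3 \mapsto R_3 - \alpha_1 R_4$ (which leave the determinant unchanged) will kill the $\alpha_1$ in column $1$ and, more importantly, convert the entry $\alpha_3$ into $\alpha_3 - \alpha_1 \alpha_2 = -\Delta_2$ and similarly produce the differences $\alpha_5 - \alpha_1 \alpha_4$ and $\alpha_7 - \alpha_1 \alpha_6$ that are visible in the statement.

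After these operations only the $(2,1)$ entry of column $1$ is nonzero, so expanding along the first column reduces the problem to a $3\times 3$ determinant whose entries are $-\Delta_2$, $\alpha_5 - \alpha_1\alpha_4$, $\alpha_7 - \alpha_1\alpha_6$ together with one row of the form $(1,\alpha_2,\alpha_4)$. Setting $A := \alpha_5 - \alpha_1\alpha_4$, $B := \alpha_7 - \alpha_1\alpha_6$ and $D := \Delta_2$ for readability, I would then expand the resulting $3\times 3$ determinant along its first column; each of the four terms in the statement appears immediately, so the final step is just matching signs coming from the cofactor expansions.

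I do not expect any real obstacle: the computation is elementary and the structure of the Hurwitz matrix (the pairing of rows by $\alpha_1$ versus $1$) is exactly what makes the two row operations produce the combinations appearing in the claim. The only thing requiring care is sign bookkeeping in the two cofactor expansions, so I would double-check that the overall sign of $-\alpha_4 \Delta_2^2$ (the ``leading'' term in the claimed identity) is correctly produced, using that the first cofactor contributes a factor $(-1)^{2+1}=-1$ and that the $(-\Delta_2)\cdot(-\Delta_2)\cdot \alpha_4$ contribution from the $3\times 3$ expansion is positive.
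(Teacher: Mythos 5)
Your computation is correct: the operations $R_1 \mapsto R_1 - \alpha_1 R_2$ and $R_3 \mapsto R_3 - \alpha_1 R_4$ turn the Hurwitz matrix into one whose first column has a single nonzero entry (a $1$ in position $(2,1)$), and the resulting $3\times 3$ minor
$$\begin{vmatrix} -\Delta_2 & \alpha_5-\alpha_1\alpha_4 & \alpha_7-\alpha_1\alpha_6 \\ 0 & -\Delta_2 & \alpha_5-\alpha_1\alpha_4 \\ 1 & \alpha_2 & \alpha_4 \end{vmatrix}$$
expands along its first column to $\alpha_4\Delta_2^2 + \alpha_2(\alpha_5-\alpha_1\alpha_4)\Delta_2 + (\alpha_5-\alpha_1\alpha_4)^2 + (\alpha_7-\alpha_1\alpha_6)\Delta_2$; the cofactor sign $(-1)^{2+1}$ then gives exactly the stated identity. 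This is, however, a genuinely different route from the paper's proof. The paper expands the original $4\times 4$ determinant by its first column, writes out the resulting eleven-term polynomial in the $\alpha_i$, and observes that the same polynomial is obtained by ``developing the formula given in the statement'' --- that is, it is a verification by comparing two fully expanded expressions. Your argument is instead a derivation: the row operations explain \emph{why} the combinations $\Delta_2$, $\alpha_5-\alpha_1\alpha_4$ and $\alpha_7-\alpha_1\alpha_6$ appear (they are precisely what elimination of $\alpha_1$ against the rows beginning with $1$ produces), and the formula emerges already in its factored form, with no polynomial bookkeeping beyond a $3\times 3$ expansion. The paper's check is shorter to state but opaque about the origin of the identity (the author credits the structure to Barnett's method); yours is slightly longer to write out but self-explanatory and less error-prone, since the only thing to verify is the sign of a single cofactor.
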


\begin{proof}
By definition  
$$\Delta_4 = 
\begin{vmatrix} 
\alpha_1 & \alpha_3 & \alpha_5 & \alpha_7 \\ 
1        & \alpha_2 & \alpha_4 & \alpha_6 \\ 
0		 & \alpha_1 & \alpha_3 & \alpha_5 \\ 
0 		 & 1		& \alpha_2 & \alpha_4 \\
\end{vmatrix}
$$
Expansion by the first column leads to the expression 
\begin{multline*} 
\Delta_4 = \alpha_1\alpha_2\alpha_3\alpha_4 + 2\alpha_1\alpha_4\alpha_5 
- \alpha_1\alpha_2^2\alpha_5 - \alpha_1^2 \alpha_4^2 - 
\alpha_3^2 \alpha_4 - \alpha_5^2 + \alpha_2\alpha_3\alpha_5 + \\
+ \alpha_1^2\alpha_2\alpha_6 - \alpha_1\alpha_3\alpha_6 
- \alpha_1\alpha_2\alpha_7 + \alpha_3\alpha_7 
\end{multline*}
that can be obtained by developing the formula given in the statement.
\end{proof}

The alternative formula for $\Delta_4$ in Theorem \ref{main} 
was originally obtained in most part 
by the method described in \cite{B} for degree five  
(where $\alpha_6$ and $\alpha_7$ are both zero). 

This formula facilitates the determination of the sign of $\Delta_4$, 
as we'll see bellow, and can also be easily implemented to minimize 
computational cost and cumulative error of non-specialized software, 
being particularly useful to study polynomials of degree five. 


\section{Some consequences for Hurwitz instability}

\begin{cor} \label{cor1}
Let $p(s) = s^n + \alpha_1 s^{n -1} + \ldots + \alpha_n$, with $n \in 
\mathbb{N}$ greater than or equal to $5$. 
Then a sufficient condition for $p(s)$ to be unstable is that 
\begin{equation*} 
\alpha_5 - \alpha_1 \alpha_4 \geq 0 \text{  \qquad and \qquad  } 
\alpha_7 - \alpha_1\alpha_6 \geq 0
\end{equation*}
\end{cor}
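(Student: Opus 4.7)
My plan is to argue by contradiction, leveraging the formula supplied by Theorem \ref{main} together with the Li\'enard-Chipart criterion. Suppose $p(s)$ were Hurwitz stable. Then Theorem \ref{lien} would force $\alpha_i > 0$ for every $i = 1, \ldots, n$, and simultaneously $\Delta_2 = \alpha_1\alpha_2 - \alpha_3 > 0$ and $\Delta_4 > 0$.

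With these positivity conditions in hand, I would substitute into the identity
$$\Delta_4 = -\alpha_2 (\alpha_5 - \alpha_1\alpha_4)\Delta_2 - \alpha_4 \Delta_2^2 - (\alpha_5 - \alpha_1\alpha_4)^2 - (\alpha_7 - \alpha_1\alpha_6)\Delta_2$$
and inspect the sign of each of the four summands under the hypotheses $\alpha_5 - \alpha_1\alpha_4 \geq 0$ and $\alpha_7 - \alpha_1\alpha_6 \geq 0$. Since $\alpha_2, \alpha_4 > 0$ and $\Delta_2 > 0$, the first, third and fourth summands are manifestly non-positive, while the middle term $-\alpha_4\Delta_2^2$ is \emph{strictly} negative. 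Hence $\Delta_4 < 0$, contradicting $\Delta_4 > 0$, and the stability assumption must fail.

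A small case-check I would record along the way: for $n=5$ one has $\alpha_6 = \alpha_7 = 0$, so the second hypothesis $\alpha_7 - \alpha_1\alpha_6 \geq 0$ is automatic and the sign analysis applies verbatim; for $n=6$ one has $\alpha_7 = 0$, and stability would require $\alpha_1, \alpha_6 > 0$, so $\alpha_7 - \alpha_1\alpha_6 = -\alpha_1\alpha_6 < 0$ already contradicts the hypothesis directly (one need not even reach the $\Delta_4$ computation). For $n \geq 7$ all the coefficients appearing in the formula are genuine coefficients of $p(s)$ and the uniform argument above goes through without modification.

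I do not foresee any genuine obstacle: the structural work is concentrated in Theorem \ref{main}, and the corollary reduces to a bookkeeping exercise on the signs of four terms. The only point that requires care is the edge case $n = 6$, which is handled by the brief observation above.
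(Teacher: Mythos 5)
Your proof is correct and follows essentially the same route as the paper: reduce via the Li\'enard--Chipart criterion to the case $\alpha_i>0$ and $\Delta_2>0$, then read off the sign of $\Delta_4$ term by term from the formula of Theorem \ref{main}. Phrasing it as a contradiction rather than a direct reduction, and the extra (correct but unneeded) remarks on $n=5,6$, do not change the substance.
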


\begin{proof} 
We can assume that $\alpha_i> 0$, for $i = 1, \ldots, n$ 
and $\Delta_2 > 0$, since otherwise 
$p(s)$ is automatically unstable by the Li\'{e}nard-Chipart criterion 
(Theorem \ref{lien}). 
Therefore, by Theorem \ref{main}, 
$\Delta_4 \leq 0$ so $p(s)$ is Hurwitz unstable.
\end{proof}

Note that the second inequality in the corollary is automatically 
satisfied if the degree of $p(s)$ is five. 

\begin{cor} \label{cor2}
If $\alpha_7 - \alpha_1\alpha_6 \geq 0$ and 
$n$ is greater than or equal to $5$ the polynomial 
$$p(s) = s^n + \alpha_1 s^{n - 1} + 2 s^{n - 2} 
+ \alpha_3 s^{n - 3} + s^{n - 4} + \ldots + \alpha_n $$ 
is unstable 
\end{cor}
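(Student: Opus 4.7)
The plan is to reduce to Corollary \ref{cor1}'s setup and then invoke Theorem \ref{main} directly, with the specific values $\alpha_2 = 2$ and $\alpha_4 = 1$ chosen precisely so that a certain piece of the formula becomes a perfect square.

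First, as in the proof of Corollary \ref{cor1}, I would dispose of the trivial cases: if some $\alpha_i$ fails to be strictly positive, or if $\Delta_2 = \alpha_1\alpha_2 - \alpha_3 = 2\alpha_1 - \alpha_3 \leq 0$, then by Theorem \ref{lien} the polynomial is already unstable and there is nothing to prove. So I assume $\alpha_i > 0$ for all $i$ and $\Delta_2 > 0$.

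Next, I would substitute $\alpha_2 = 2$ and $\alpha_4 = 1$ into the identity of Theorem \ref{main}, obtaining
\begin{equation*}
\Delta_4 = -2(\alpha_5 - \alpha_1)\Delta_2 - \Delta_2^2 - (\alpha_5 - \alpha_1)^2 - (\alpha_7 - \alpha_1\alpha_6)\Delta_2.
\end{equation*}
The key observation, and really the whole point of the choice $\alpha_2 = 2, \alpha_4 = 1$, is that the first three terms assemble into $-\bigl[(\alpha_5 - \alpha_1) + \Delta_2\bigr]^2$, so
\begin{equation*}
\Delta_4 = -\bigl[(\alpha_5 - \alpha_1) + \Delta_2\bigr]^2 - (\alpha_7 - \alpha_1\alpha_6)\Delta_2.
\end{equation*}

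Finally, since $\Delta_2 > 0$ and $\alpha_7 - \alpha_1\alpha_6 \geq 0$ by hypothesis, both summands on the right are nonpositive, hence $\Delta_4 \leq 0$. By Theorem \ref{lien}, $p(s)$ is Hurwitz unstable, as required.

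There is essentially no obstacle here beyond recognizing the perfect square; the calculation is routine once the substitution is made. The mild subtlety is noting that the conclusion $\Delta_4 \leq 0$, rather than $\Delta_4 < 0$, is already enough, because the Liénard-Chipart criterion requires strict positivity of $\Delta_4$ for stability.
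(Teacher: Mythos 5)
Your proof is correct and follows the paper's own argument exactly: substitute $\alpha_2 = 2$, $\alpha_4 = 1$ into Theorem \ref{main}, recognize the perfect square $-\bigl[\Delta_2 + (\alpha_5 - \alpha_1\alpha_4)\bigr]^2$, and conclude $\Delta_4 \leq 0$. You are in fact slightly more explicit than the paper in disposing of the cases where some $\alpha_i \leq 0$ or $\Delta_2 \leq 0$, which the paper's proof only handles with the bare phrase ``Assume that $\Delta_2 > 0$.''
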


\begin{proof} Assume that $\Delta_2 > 0$. 
Since $\alpha_2 = 2 $ and $\alpha_4 = 1$ we have that 
\begin{align*} \Delta_ 4 & = - 2 (\alpha_5 - \alpha_1\alpha_4)\Delta_2 
- \Delta_2^2 - (\alpha_5 - \alpha_1 \alpha_4)^2 
- (\alpha_7 - \alpha_1\alpha_6)\Delta_2 = \\ 
& = - [ (\alpha_1\alpha_2 - \alpha_3) 
+ (\alpha_5 - \alpha_1 \alpha_4) ]^2 - (\alpha_7 - \alpha_1\alpha_6)\Delta_2 
\leq 0
\end{align*}
\end{proof}

Recall that if every $\alpha_i$ is strictly positive then 
all real roots must be strictly negative. Therefore in such a case and if
$p(s)$ is as in Corollary \ref{cor2} then
$p(s)$ needs to have at least a pair of complex conjugated 
roots with positive real part.  Also, if $n$ is odd we can assure that there 
will be at least one negative real root.

\begin{cor} \label{cor3}
Let $p(s) = s^n + \alpha_1 s^{n -1} + \ldots + \alpha_n$, with $n \in 
\mathbb{N}$ greater than or equal to $5$. 
Then a sufficient condition for $p(s)$ to be unstable is that 
$$ \alpha_2^2 - 4 \alpha_4 \leq 0 \text{ \qquad and \qquad} 
\alpha_7 - \alpha_1\alpha_6 \geq 0$$
\end{cor}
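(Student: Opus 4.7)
The plan is to follow exactly the template used in Corollaries \ref{cor1} and \ref{cor2}: assume for contradiction that $p(s)$ is stable, apply the Li\'{e}nard--Chipart criterion to reduce to showing $\Delta_4 \leq 0$, and then extract the sign of $\Delta_4$ directly from the formula in Theorem \ref{main}.

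First, I would reduce to the nontrivial case. If any $\alpha_i$ fails to be strictly positive, or if $\Delta_2 = \alpha_1\alpha_2 - \alpha_3 \leq 0$, then Theorem \ref{lien} already forces $p(s)$ to be unstable and we are done. So assume $\alpha_i > 0$ for all $i$ and $\Delta_2 > 0$.

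Next, set $x := \alpha_5 - \alpha_1\alpha_4$ and $D := \Delta_2 > 0$. The formula from Theorem \ref{main} reads
\begin{equation*}
\Delta_4 = -\bigl(\alpha_4 D^2 + \alpha_2 x D + x^2\bigr) - (\alpha_7 - \alpha_1\alpha_6)\,D.
\end{equation*}
The hypothesis $\alpha_7 - \alpha_1\alpha_6 \geq 0$ makes the second summand non-positive. The key step is to recognize the bracketed expression as a homogeneous quadratic form in the variables $D$ and $x$, with matrix discriminant $\alpha_2^2 - 4\alpha_4$. Viewing it as a quadratic in $D$ (with leading coefficient $\alpha_4 > 0$), its discriminant is $(\alpha_2^2 - 4\alpha_4)x^2$, which is $\leq 0$ by the first hypothesis, so the form is non-negative definite. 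Hence $\alpha_4 D^2 + \alpha_2 xD + x^2 \geq 0$, which gives $\Delta_4 \leq 0$, contradicting Li\'{e}nard--Chipart and proving that $p(s)$ is unstable.

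I do not expect any serious obstacle: once the formula of Theorem \ref{main} is in hand, the only substantive observation is that the two hypotheses of the corollary are precisely calibrated so that one controls the sign of the quadratic form in $(D,x)$ via its discriminant, and the other controls the sign of the remaining linear term in $D$. The condition $\alpha_2^2 - 4\alpha_4 \leq 0$ is tight in the sense that it is exactly what is needed for the binary quadratic $\alpha_4 D^2 + \alpha_2 x D + x^2$ to be non-negative for all real $D$ and $x$.
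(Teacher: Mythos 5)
Your proof is correct and follows essentially the same route as the paper: both reduce via Li\'{e}nard--Chipart to the case $\alpha_i>0$, $\Delta_2>0$ and then show $\Delta_4\le 0$ by observing that the hypothesis $\alpha_2^2-4\alpha_4\le 0$ is exactly the discriminant condition forcing the quadratic $\alpha_4 D^2+\alpha_2 xD+x^2$ to be non-negative, while $\alpha_7-\alpha_1\alpha_6\ge 0$ handles the remaining term. The only cosmetic difference is that you keep a homogeneous form in $(D,x)$, whereas the paper divides by $(\alpha_5-\alpha_1\alpha_4)^2$ to study the one-variable concave quadratic $\Gamma(\gamma)=-\alpha_4\gamma^2-\alpha_2\gamma-1$, which forces it to treat the (trivial) case $\alpha_5-\alpha_1\alpha_4=0$ separately; your formulation avoids that split.
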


\begin{proof} Note that 
if $\alpha_5 - \alpha_1\alpha_4 = 0$ then $\Delta_4 < 0$ so there's nothing 
to do.Otherwise define
$$\Gamma := \frac{\Delta_4 + (\alpha_7 - \alpha_1\alpha_6)\Delta_2}{(\alpha_5 - \alpha_1\alpha_4)^2} =   
-\alpha_2 \frac{(\alpha_1\alpha_2 - \alpha_3)}{(\alpha_5 - \alpha_1\alpha_4)}
- \alpha_4 
\frac{(\alpha_1\alpha_2 - \alpha_3)^2}{(\alpha_5 - \alpha_1\alpha_4)^2} - 1
$$
and note that $\Gamma < 0$ implies that $\Delta_4 < 0$. Let 
$$ \gamma :=  
\frac{(\alpha_1\alpha_2 - \alpha_3)}{(\alpha_5 - \alpha_1\alpha_4)} $$ 
so $\Gamma = - \alpha_4 \gamma^2 -\alpha_2 \gamma - 1$. 

As a function of $\gamma$, $\Gamma$ is concave, since we can assume 
that $\alpha_4 > 0$. 
Then for $\Gamma(\gamma)$ to be positive for some $\gamma$ we need to have that 
$\alpha_2^2 - 4 \alpha_4 > 0$ and that 
$$ \frac{\alpha_2 + \sqrt{\alpha_2^2 - 4 \alpha_4}}{-2\alpha_4} < \gamma 
< \frac{\alpha_2 - \sqrt{\alpha_2^2 - 4 \alpha_4}}{-2\alpha_4} $$
 
\end{proof}

An easy consequence of the previous result is that, if 
$\alpha_7 - \alpha_1\alpha_6 \geq 0$,
$$p(s) = s^n + \alpha_1 s^{n - 1} + \alpha_2 s^{n - 2} 
+ \alpha_3 s^{n - 3} + s^{n - 4} + \ldots + \alpha_n $$ 
is unstable for every $0 < \alpha_2 \leq 2$ 
(compare to Corollary \ref{cor2}). 


\section{Some consequences for Hurwitz stability}

A reformulation of the previous corollary 
gives a criteria for stability. 

\begin{cor} \label{cor4}
Let $p(s) = s^5 + \alpha_1 s^{4} + \alpha_2 s^{3} 
+ \alpha_3 s^{2} + \alpha_4 s + \alpha_5 $. 
Assume that $\alpha_i > 0 $ for $i = 1, \ldots, 5$ and that $\Delta_2 > 0$.  
Then a necessary condition for $p(s)$ to be Hurwitz stable is that 
$$ \alpha_2^2 - 4 \alpha_4 > 0 $$
\end{cor}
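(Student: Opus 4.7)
The plan is to observe that Corollary \ref{cor4} is essentially the contrapositive of Corollary \ref{cor3} specialized to degree five. Under the convention $\alpha_k = 0$ for $k > n$ stipulated after Theorem \ref{lien}, a degree-five polynomial automatically has $\alpha_6 = \alpha_7 = 0$, so the expression $\alpha_7 - \alpha_1\alpha_6$ vanishes identically; in particular the second hypothesis of Corollary \ref{cor3} is automatically satisfied.

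Concretely, I would argue by contradiction: suppose that $p(s)$ is Hurwitz stable but that $\alpha_2^2 - 4 \alpha_4 \leq 0$. The standing assumptions of Corollary \ref{cor4} place us with $\alpha_i > 0$ and $\Delta_2 > 0$, which is exactly the nontrivial regime handled inside the proof of Corollary \ref{cor3} (the case where Li\'{e}nard--Chipart has not already decided the question). Both hypotheses of Corollary \ref{cor3} are then in force, which immediately yields that $p(s)$ is Hurwitz unstable, a contradiction. Hence the necessary condition $\alpha_2^2 - 4\alpha_4 > 0$ must hold whenever $p(s)$ is stable.

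I do not anticipate any substantive obstacle: all of the analytic content (the concavity of $\Gamma$ in $\gamma$ and the resulting discriminant condition) was already extracted in the proof of Corollary \ref{cor3}, and what remains here is essentially bookkeeping to check that the degree-five convention collapses the auxiliary hypothesis on $\alpha_6$ and $\alpha_7$. The one point worth verifying carefully is the strictness of the inequality: Corollary \ref{cor3} under $\alpha_2^2 - 4\alpha_4 \leq 0$ delivers $\Delta_4 \leq 0$, whereas Hurwitz stability via Theorem \ref{lien} requires $\Delta_4 > 0$, so the contrapositive produces exactly the strict bound $\alpha_2^2 - 4\alpha_4 > 0$ stated.
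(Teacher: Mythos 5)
Your proposal is correct and matches the paper's intent exactly: the paper offers no separate argument for Corollary \ref{cor4}, introducing it only as ``a reformulation of the previous corollary,'' i.e.\ precisely the contrapositive of Corollary \ref{cor3} with the degree-five convention $\alpha_6 = \alpha_7 = 0$ collapsing the auxiliary hypothesis. Your attention to the strictness of the inequality (Li\'{e}nard--Chipart demands $\Delta_4 > 0$, while $\alpha_2^2 - 4\alpha_4 \leq 0$ forces $\Delta_4 \leq 0$) is a welcome detail the paper leaves implicit.
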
 

Note that the hypotheses of the former corollary implies in particular 
that $ (\alpha_5  - \alpha_1\alpha_4) < 0$, since otherwise 
$\Gamma(\gamma) < 0$ (see Corollary \ref{cor1}). 
Compare this to how the Li\'{e}nard-Chipart shows 
that some conditions of the Routh-Hurwitz theorem are not independent. 
Our final result
is still impractical as a tool to study problems like in \cite{J}, 
but is provided as an alternative 
to direct application of Theorem \ref{lien} or to 
the final inequality in Corollary \ref{cor3}.  

\begin{cor} \label{cor5}
Let $p(s) = s^5 + \alpha_1 s^4 + \alpha_2 s^3 + \alpha_3 s^2 + 
\alpha_4 s + \alpha_5 $ where $\alpha_i > 0 $ for $i = 1, 2, 3, 4, 5$. 
Assume that $ \Delta_2 > 0 $ and assume that 
$$ \alpha_2^2 - 4 \alpha_4 > 0 $$
Then a sufficient condition for $p(s)$ to be stable is that 
$$ \frac{\alpha_1\alpha_2 - \alpha_3}{\alpha_5 - \alpha_1 \alpha_4} 
= \frac{- \alpha_2}{2\alpha_4} $$
\end{cor}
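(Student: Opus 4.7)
The plan is to apply Theorem \ref{main} specialized to $n=5$, where $\alpha_6 = \alpha_7 = 0$, so the formula reduces to
$$\Delta_4 = -\alpha_2(\alpha_5 - \alpha_1\alpha_4)\Delta_2 - \alpha_4 \Delta_2^2 - (\alpha_5 - \alpha_1\alpha_4)^2,$$
and then to reuse the substitution from the proof of Corollary \ref{cor3}. Since $\alpha_7 - \alpha_1\alpha_6 = 0$, with $\gamma = (\alpha_1\alpha_2-\alpha_3)/(\alpha_5-\alpha_1\alpha_4)$ we get $\Delta_4 = (\alpha_5-\alpha_1\alpha_4)^2 \, \Gamma(\gamma)$ where $\Gamma(\gamma) = -\alpha_4\gamma^2 - \alpha_2\gamma - 1$. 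Thus $\mathrm{sgn}(\Delta_4) = \mathrm{sgn}(\Gamma(\gamma))$ (the degenerate case $\alpha_5 = \alpha_1\alpha_4$ need not be considered, since the hypothesis picks a specific nonzero value for $\gamma$).

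Next, I would observe that the quadratic $\Gamma$ is concave (because $\alpha_4 > 0$), so it is maximized at exactly
$$\gamma^\ast = -\frac{\alpha_2}{2\alpha_4},$$
which is precisely the value prescribed by the hypothesis. A direct computation gives
$$\Gamma(\gamma^\ast) = \frac{\alpha_2^2 - 4\alpha_4}{4\alpha_4},$$
and the assumption $\alpha_2^2 - 4\alpha_4 > 0$ together with $\alpha_4 > 0$ ensures $\Gamma(\gamma^\ast) > 0$. Consequently $\Delta_4 > 0$.

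Finally, invoking Theorem \ref{lien} for $n=5$: stability requires only $\alpha_i > 0$ for $i = 1,\ldots,5$ (assumed), $\Delta_2 > 0$ (assumed), and $\Delta_4 > 0$ (just established). Hence $p(s)$ is Hurwitz stable.

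I do not expect a real obstacle here; the argument is essentially the observation that the hypothesized equality selects the vertex of the concave parabola $\Gamma$, and the discriminant condition $\alpha_2^2 - 4\alpha_4 > 0$ is exactly what makes this vertex value positive. The only mild subtlety is book-keeping: one should note in passing that the equality $\gamma = -\alpha_2/(2\alpha_4) < 0$ together with $\Delta_2 > 0$ forces $\alpha_5 - \alpha_1\alpha_4 < 0$, so the division used to define $\gamma$ and the passage from $\mathrm{sgn}(\Gamma)$ to $\mathrm{sgn}(\Delta_4)$ are legitimate, consistent with the remark that follows Corollary \ref{cor4}.
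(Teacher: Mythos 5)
Your proposal is correct and follows essentially the same route as the paper: both identify the hypothesized equality as placing $\gamma$ at the vertex of the concave parabola $\Gamma$, use $\alpha_2^2 - 4\alpha_4 > 0$ to conclude $\Gamma(\overline{\gamma}) > 0$ (you compute the vertex value explicitly; the paper argues via the existence of two distinct real roots), and then conclude $\Delta_4 > 0$ and stability from Theorem \ref{lien}. Your added remarks on the sign of $\alpha_5 - \alpha_1\alpha_4$ and the legitimacy of the division are consistent with the paper's discussion following Corollary \ref{cor4}.
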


\begin{proof} 
Just note that if $\overline{\gamma}$ denotes de vertex of $\Gamma(\gamma)$ 
then $\overline{\gamma} := \frac{- \alpha_2}{2\alpha_4} $. 
Since $\Gamma(\gamma)$ has two distinct roots then 
$\Gamma (\gamma) = \Gamma (\overline{\gamma}) > 0$, therefore $\Delta_4 > 0$. 

It's also possible to check directly that $ \Delta_4 > 0 $ by studying the 
sign of $ \Gamma $.
\end{proof}

Finally, we note that 
the sufficient condition above is equivalent to 
$$ \Delta_2 = \alpha_3 - \frac{\alpha_2 \alpha_5}{\alpha_4} \text {\qquad 
 and to \qquad } 
\alpha_1 = \frac{2 \alpha_3}{\alpha_2} - \frac{\alpha_5}{\alpha_4}$$



\begin{thebibliography}{1}

\bibitem{B} Barnett, S. ``A new formulation of the theorems of Hurwitz, 
Routh and Sturm." IMA Journal of Applied Mathematics 8.2 (1971): 240-250.	

\bibitem{G} Gantmacher F. R., The theory of matrices Vol. 2. 
New York, NY, USA: Chelsea Publishing Company, 1960.

\bibitem{K} Khalil, H. K., Nonlinear Systems. Upper Saddle River, NJ, USA:  
Prentice-Hall, 2001. 

\bibitem{LC} Li\'{e}nard, A., and M. H. Chipart. ``Sur le signe de la partie 
réelle des racines d’une équation algébrique." J. Math. Pures Appl 10.4 
(1914): 291-346. 

\bibitem{J} Iossaqui, J. G. \& Camino, J. F., ``Wheeled robot slip  
compensation 
for trajectory tracking control problem with time-varying reference input." 
Robot Motion and Control (RoMoCo), 2013 9th Workshop on. IEEE, 2013.

\end{thebibliography}
\end{document}